\providecommand{\U}[1]{\protect\rule{.1in}{.1in}}
\newtheorem{theorem}{Theorem}
\newtheorem{remark}[theorem]{Remark}
\newenvironment{proof}[1][Proof]{\noindent\textbf{#1.} }{\ \rule{0.5em}{0.5em}}
\begin{document}

\title{A representation of the transmutation kernels for the Schr\"{o}dinger operator
in terms of eigenfunctions and applications}
\author{Kira V. Khmelnytskaya$^{1}$, Vladislav V. Kravchenko$^{2,3}$, Sergii M.
Torba$^{3}$\thanks{The authors acknowledge the support from CONACYT, Mexico
via the projects 284470 and 222478.}\\$^{1}${\footnotesize Faculty of Engineering, Autonomous University of
Queretaro, }\\{\footnotesize Cerro de las Campanas s/n, col.~Las Campanas Quer\'{e}taro,
Qro. C.P.~76010 M\'{e}xico}\\$^{2}${\small Regional mathematical center of Southern Federal University, }\\{\small Bolshaya Sadovaya, 105/42, Rostov-on-Don, 344006, Russia,}\\$^{3}${\footnotesize Department of Mathematics, Cinvestav, Unidad
Quer\'{e}taro}\\{\footnotesize Libramiento Norponiente \#2000, Fracc.~Real de Juriquilla,
 Quer\'{e}taro, Qro. C.P.~76230 M\'{e}xico}\\{\footnotesize khmel@uaq.edu.mx, vkravchenko@math.cinvestav.edu.mx,
storba@math.cinvestav.edu.mx}}
\maketitle

\begin{abstract}
The representations of the kernels of the transmutation operator and of its
inverse relating the one-dimensional Schr\"{o}dinger operator with the second
derivative are obtained in terms of the eigenfunctions of a corresponding
Sturm-Liouville problem. Since both series converge slowly and in general only
in a certain distributional sense we find a way to improve these expansions
and make them convergent uniformly and absolutely by adding and subtracting
corresponding terms. A numerical illustration of the obtained results is
given.

\end{abstract}

\section{Introduction}

Since the first work by J. Delsarte \cite{Delsarte1}, \cite{Delsarte2} the
transmutation operator relating the one-dimensional Schr\"{o}dinger operator
$L:=\frac{d^{2}}{dx^{2}}-q(x)$ to a more elementary operator $B:=\frac{d^{2}%
}{dx^{2}}$ has been subject of study  in hundreeds of publications devoted to
spectral theory and inverse problems (see, e.g., \cite{BegehrGilbert},
\cite{Carroll}, \cite{Katrakhov Sitnik 2018}, \cite{LevitanInverse},
\cite{Marchenko}, \cite{Yurko2007}). Recently in \cite{KNT2017},
\cite{Kr2018}, \cite{KTK2017} several representations of the integral kernel
of the transmutation operator in terms of series expansions in classical
orthogonal polynomials have been obtained equipped with convenient recurrent
formulas for the expansion coefficients. Every such representation leads to a
new functional series representation of the solutions of the Schr\"{o}dinger
equation which enjoys a remarkable uniformness property. It admits a spectral
parameter independent estimate for the approximation of the solution by
partial sums of the series, which in practice allows one to compute huge
numbers of eigenvalues and eigenfunctions with a controllable accuracy
\cite{KNT2017}, \cite{Kr2018}, \cite{KTK2017}. Similar results were obtained
for perturbed Bessel equations \cite{KTC2018}, \cite{KST2018}.

Up to now no similar representation has been obtained for the integral kernel
of the inverse transmutation operator which is required in numerous
applications, especially when solving initial-boundary value problems for PDEs
with variable coefficients. Moreover, an apparently unanswered  question is to
find an eigenfunction expansion of both the direct and the inverse
transmutation kernels, resembling the well known expansion of the Green
function. Such eigenfunction series expansions additionally to their profound
theoretical value acquire also computational significance due to the
availability of the representations of solutions admitting the spectral
parameter independent estimates and allowing one to compute huge amounts of eigendata.

In the present work we obtain an eigenfunction expansion of the integral
transmutation kernels of both the direct and the inverse transmutation
operators. Quite naturally, since the transmutation operators are related to
pairs of differential operators, the corresponding eigenfunction expansions
contain the eigendata of both differential operators.

The series expansions of both the direct and the inverse transmutation
operators converge slowly and in general only in a certain distributional
sense. We find the way to improve these expansions and make them convergent
uniformly and absolutely by adding and subtracting corresponding terms. We
give a numerical illustration of the obtained results.

\section{Preliminaries}

Let $q$ be a\ real valued function belonging to $L_{2}(0,\pi)$ and $h$, $H$ be
two real numbers. Consider the Sturm-Liouville problem
\begin{gather}
-y^{\prime\prime}+q(x)y=\lambda y,\label{SL1}\\
y^{\prime}(0)-hy(0)=y^{\prime}(\pi)+Hy(\pi)=0.\label{boundary conditions}%
\end{gather}
It defines two sequences of real numbers, the eigenvalues $\left\{
\lambda_{n}\right\}  _{n=0}^{\infty}$ and the weight numbers (or normalizing
constants) $\left\{  \alpha_{n}\right\}  _{n=0}^{\infty}$ such that
$\lambda_{n}\neq\lambda_{m}$ for $n\neq m$, $\alpha_{n}>0$,
\begin{equation}
\rho_{n}:=\sqrt{\lambda_{n}}=n+\frac{\omega}{\pi n}+\frac{k_{n}}{n}%
,\qquad\alpha_{n}=\frac{\pi}{2}+\frac{K_{n}}{n},\qquad\left\{  k_{n}\right\}
_{n=0}^{\infty},\left\{  K_{n}\right\}  _{n=0}^{\infty}\in l_{2}%
.\label{asymptotics of spectral data}%
\end{equation}
The weight numbers are defined as follows $\alpha_{n}:=\int_{0}^{\pi}%
c^{2}(\rho_{n},x)dx$ where $c(\rho,x)$ denotes the solution of the Cauchy
problem%
\begin{gather*}
-c^{\prime\prime}(\rho,x)+q(x)c(\rho,x)=\rho^{2}c(\rho,x),\\
c(\rho,0)=1,\qquad c^{\prime}(\rho,0)=h.
\end{gather*}
The value of the number $\omega$ in (\ref{asymptotics of spectral data}) is
given by the formula%
\begin{equation}
\omega=h+H+\frac{1}{2}\int_{0}^{\pi}q(t)\,dt.\label{omega}%
\end{equation}
Consider the Gel'fand-Levitan equation
\begin{equation}
G(x,t)+F(x,t)+\int_{0}^{x}F(t,s)G(x,s)\,ds=0,\qquad0\leq
t<x,\label{Gelfand-Levitan}%
\end{equation}
where $F(x,t)$ has the form%
\begin{equation}
F(x,t)=\sum_{n=0}^{\infty}\left(  \frac{\cos\rho_{n}x\cos\rho_{n}t}{\alpha
_{n}}-\frac{\cos nx\cos nt}{\alpha_{n}^{0}}\right)  \label{F}%
\end{equation}
with
\[
\alpha_{n}^{0}=%
\begin{cases}
\pi/2, & n>0,\\
\pi, & n=0
\end{cases}
\]
and $G(x,t)$ is the kernel of a transmutation operator
\begin{equation}
T[u](x):=u(x)+\int_{0}^{x}G(x,s)u(s)\,ds\label{Ttransmute}%
\end{equation}
relating the operator $L:=\frac{d^{2}}{dx^{2}}-q(x)$ with the operator
$B:=\frac{d^{2}}{dx^{2}}$ as follows. Let $u\in C^{2}[0,\pi]$ and $u^{\prime
}(0)=0$. Then $LTu=TBu$. Denote $v:=Tu$. Then $v^{\prime}(0)-hv(0)=0$. In
particular,
\begin{equation}
T[\cos\rho x]=c(\rho,x).\label{Tcos}%
\end{equation}

Let $H(x,t)$ denote the kernel of the inverse transmutation operator. That is
\begin{equation}
\label{Tinverse}T^{-1}\left[  u\right]  \left(  x\right)  =u\left(  x\right)
+\int_{0}^{x}H(x,s)u(s)\,ds.
\end{equation}
Recall that the kernel $H(x,t)$ can be obtained from the equality \cite[Lemma
1.3.9]{Yurko2007},%
\begin{equation}
H(x,t)-F(x,t)-\int_{0}^{t}F(x,s)G(t,s)\,ds=0,\quad0\leq t<x.
\label{H via F and G}%
\end{equation}

The integral kernels $G$ and $H$ are continuous functions in $0\leq t\leq
x\leq\pi$ and satisfy the equalities
\begin{equation}
G(x,x)=-H(x,x)=h+\frac{1}{2}\int_{0}^{x}q(t)\,dt.\label{GHcaract}%
\end{equation}

\section{Series representations for the transmutation kernels}

\begin{remark}
Equations \eqref{Gelfand-Levitan} and \eqref{H via F and G} can be written in
the form
\begin{equation}
G(x,t)=-T_{x}[ F] (x,t) \label{G-L1}%
\end{equation}
and
\begin{equation}
H(x,t)=T_{t}[ F] (x,t), \label{H1}%
\end{equation}
respectively. That is, the kernels $G(x,t)$ and $H(x,t)$ are images of the
function $\mp F(x,t)$ under the action of the transmutation operator applied
with respect to the variable $x$ and $t$, respectively.
\end{remark}

\begin{theorem}
\label{Th Representation of kernels}The kernels $G(x,t)$ and $H(x,t)$ admit
the following representations%
\begin{equation}
G(x,t)=\sum_{n=0}^{\infty}\left(  \frac{c\left(  n,x\right)  \cos nt}%
{\alpha_{n}^{0}}-\frac{c\left(  \rho_{n},x\right)  \cos\rho_{n}t}{\alpha_{n}%
}\right)  \label{G}%
\end{equation}
and
\begin{equation}
H(x,t)=\sum_{n=0}^{\infty}\left(  \frac{\cos\rho_{n}x\,c\left(  \rho
_{n},t\right)  }{\alpha_{n}}-\frac{\cos nx\,c\left(  n,t\right)  }{\alpha
_{n}^{0}}\right)  ,\label{H}%
\end{equation}
where the series converge in the following distributional sense. Let the
integral kernels $G(x,t)$ and $H(x,t)$ be extended by zero for $t>x$. Then for
any $f\in AC\left[  0,\pi\right]  $ the following limit (corresponding to
\eqref{G} and \eqref{H}) exists uniformly with respect to $x$,
\begin{equation}%
\begin{split}
\lim_{N\rightarrow\infty}\int_{0}^{\pi}f(t)I_{N}(x,t)dt &  =\int_{0}^{\pi
}f(t)\bigl(G(x,t)-H(t,x)\bigr)\,dt\\
&  =-\int_{0}^{\pi}f(t)\left(  F(x,t)+\int_{0}^{x}G(x,s)F(s,t)\,ds\right)  dt,
\end{split}
\label{DistributionalSense}%
\end{equation}
where
\[
I_{N}(x,t)=\sum_{n=0}^{N}\left(  \frac{c\left(  n,x\right)  \cos nt}%
{\alpha_{n}^{0}}-\frac{c\left(  \rho_{n},x\right)  \cos\rho_{n}t}{\alpha_{n}%
}\right)  .
\]
Note that for $x\neq t$ only one of the values $G(x,t)$ or $H(t,x)$ can be
different from zero.
\end{theorem}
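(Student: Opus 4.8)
The plan is to reduce the convergence assertion to the (classical) distributional convergence of the Gel'fand--Levitan series \eqref{F}, by observing that the partial sums of \eqref{G} are exactly the images under $T_x$ of the partial sums of \eqref{F}. Writing
\[
F_N(x,t)=\sum_{n=0}^N\left(\frac{\cos\rho_n x\cos\rho_n t}{\alpha_n}-\frac{\cos nx\cos nt}{\alpha_n^0}\right)
\]
for the $N$-th partial sum of $F$, and using $c(\rho,x)=\cos\rho x+\int_0^x G(x,s)\cos\rho s\,ds$ from \eqref{Tcos}, I would apply $T_x$ termwise to $F_N$ to obtain the purely algebraic identity
\[
I_N(x,t)=-T_x[F_N](x,t)=-F_N(x,t)-\int_0^x G(x,s)F_N(s,t)\,ds.
\]
This identity, which needs no convergence hypotheses, is the observation on which the whole argument rests and mirrors the relation \eqref{G-L1} at the level of partial sums.

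Next I would fix $f\in AC[0,\pi]$ and pass to the limit in
\[
\int_0^\pi f(t)I_N(x,t)\,dt=-\int_0^\pi f(t)F_N(x,t)\,dt-\int_0^x G(x,s)\left(\int_0^\pi f(t)F_N(s,t)\,dt\right)ds.
\]
The essential analytic input is that $\int_0^\pi f(t)F_N(x,t)\,dt$ converges, uniformly in $x$, to $\int_0^\pi f(t)F(x,t)\,dt$. Since $G(x,s)$ is continuous, hence bounded on $0\le s\le x\le\pi$, this uniform convergence then passes through the inner integral against $G$, so the second term converges to $\int_0^x G(x,s)\int_0^\pi f(t)F(s,t)\,dt\,ds$, and altogether
\[
\lim_{N\to\infty}\int_0^\pi f(t)I_N(x,t)\,dt=-\int_0^\pi f(t)\left(F(x,t)+\int_0^x G(x,s)F(s,t)\,ds\right)dt,
\]
which is the last line of \eqref{DistributionalSense}.

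To recover the first line I would split the $t$-integral at $t=x$ and use the symmetry $F(x,t)=F(t,x)$ evident from \eqref{F}. For $t<x$ the Gel'fand--Levitan equation \eqref{Gelfand-Levitan} shows the integrand $-T_x[F](x,t)$ equals $G(x,t)$; for $t>x$ the relation \eqref{H via F and G}, read with $x$ and $t$ interchanged, shows it equals $-H(t,x)$. As $G(x,t)$ and $H(t,x)$ are supported on $t\le x$ and $t\ge x$ respectively, the right-hand side is exactly $\int_0^\pi f(t)\bigl(G(x,t)-H(t,x)\bigr)dt$, which establishes \eqref{G} together with its distributional meaning. The representation \eqref{H} follows by the mirror-image argument: one replaces $T_x$ by $T_t$ and invokes \eqref{H1} in place of \eqref{G-L1}.

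The main obstacle is the uniform convergence used in the second step, because the two series being subtracted are each only barely convergent and must not be treated separately; the cancellation has to be extracted explicitly. Concretely, writing $a_n=\int_0^\pi f(t)\cos\rho_n t\,dt$ and $b_n=\int_0^\pi f(t)\cos nt\,dt$, one integration by parts (legitimate since $f$ is absolutely continuous) shows $a_n,b_n=O(1/n)$ with square-summable oscillatory parts coming from the Fourier coefficients of $f'$. The critical quantity is then the termwise difference $\frac{\cos\rho_n x}{\alpha_n}a_n-\frac{\cos nx}{\alpha_n^0}b_n$, which I would estimate by Taylor-expanding $\cos\rho_n x$ about $\cos nx$ and $1/\alpha_n$ about $2/\pi$; by the asymptotics \eqref{asymptotics of spectral data}, namely $\rho_n-n=O(1/n)$ and $\alpha_n-\alpha_n^0=O(1/n)$ with $l_2$ remainders, the leading $O(1/n)$ contributions cancel and each surviving piece is a product of two $l_2$ sequences, hence summable uniformly in $x$ by Cauchy--Schwarz.
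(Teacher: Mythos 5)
Your argument is correct, but it is organized differently from the paper's. The paper expands each of $c(n,x)$ and $c(\rho_n,x)$ via the direct and inverse transmutation relations, splitting $I_N$ into three pieces $\Phi_N+I_{N,2}+I_{N,4}$, and then invokes three separate limit relations from the proof of Theorem~1.3.1 in Yurko's book (an equiconvergence statement for $\Phi_N$, the Fourier cosine expansion of $G(x,\cdot)$ extended by zero, and the eigenfunction expansion of $H(\cdot,x)$); this lands directly on the first line of \eqref{DistributionalSense}, with the second line then following from \eqref{Gelfand-Levitan} and \eqref{H via F and G}. You instead use the single exact identity $I_N=-T_x[F_N]=-F_N-\int_0^x G(x,s)F_N(s,\cdot)\,ds$ at the level of partial sums, so that the only analytic input is the uniform convergence of $\int_0^\pi f(t)F_N(x,t)\,dt$; you reach the second line of \eqref{DistributionalSense} first and recover the first line from the Gel'fand--Levitan relations, exactly reversing the paper's order. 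Your route is more self-contained (one equiconvergence lemma instead of three cited limits) and makes the role of \eqref{G-L1} transparent, at the price of having to prove that lemma yourself; the paper's route outsources all the hard analysis to Yurko. Two small points to tighten: (i) for $f\in AC[0,\pi]$ one only has $f'\in L_1$, so the Fourier coefficients of $f'$ are bounded and tend to zero but need not be square-summable --- fortunately your cancellation argument only needs their boundedness, since they are always multiplied by factors that are already $O\bigl((1+|k_n|+|K_n|)/n^2\bigr)$; and (ii) having shown that $\int_0^\pi f(t)F_N(x,t)\,dt$ converges uniformly, you should add a line identifying the limit with $\int_0^\pi f(t)F(x,t)\,dt$, e.g.\ via the dominated pointwise convergence of $F_N$ to $F$ that follows from \eqref{F sum as} and \eqref{a(x)Yurko}.
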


\begin{remark}
\label{Remark Discontinuity} As can be concluded from results of Section
\ref{Section Improvement} and Jordan's theorem \cite[Chap.1, \S 39]{Bary1964},
the series in \eqref{G} and \eqref{H} converge pointwise on $[0,\pi
]\times\lbrack0,\pi]$, uniformly on any compact subset of $[0,\pi
)\times\lbrack0,\pi)$ and their sums are continuous functions except a jump
discontinuity at $x=t=\pi$, the size of the jump being $\omega$, where $\omega$ is the parameter from \eqref{omega}.
\end{remark}

\begin{proof}
Consider
\[
T_{x}\left[  \frac{\cos\rho_{n}x\cos\rho_{n}t}{\alpha_{n}}-\frac{\cos nx\cos
nt}{\alpha_{n}^{0}}\right]  =\frac{c\left(  \rho_{n},x\right)  \cos\rho_{n}%
t}{\alpha_{n}}-\frac{c\left(  n,x\right)  \cos nt}{\alpha_{n}^{0}}.
\]
Hence, formally,
\[
G(x,t)=-T_{x}\sum_{n=0}^{\infty}\left(  \frac{\cos\rho_{n}x\cos\rho_{n}%
t}{\alpha_{n}}-\frac{\cos nx\cos nt}{\alpha_{n}^{0}}\right)  =\sum
_{n=0}^{\infty}\left(  \frac{c\left(  n,x\right)  \cos nt}{\alpha_{n}^{0}%
}-\frac{c\left(  \rho_{n},x\right)  \cos\rho_{n}t}{\alpha_{n}}\right)  ,
\]
and similarly for $H$.

Due to \eqref{Ttransmute}, \eqref{Tinverse} and \eqref{Tcos} one has
\[
c(n,x)=\cos nx+\int_{0}^{x}G(x,s)\cos ns\,ds,\qquad\cos\rho_{n}x=c(\rho
_{n},x)+\int_{0}^{x}H(x,s)c(\rho_{n},s)\,ds,
\]
hence
\[%
\begin{split}
I_{N}(x,t): &  =\sum_{n=0}^{N}\left(  \frac{c(n,x)\cos nt}{\alpha_{n}^{0}%
}-\frac{c(\rho_{n},x)\cos\rho_{n}t}{\alpha_{n}}\right)  \\
&  =\sum_{n=0}^{N}\left(  \frac{\cos nx\cos nt}{\alpha_{n}^{0}}-\frac{\cos
\rho_{n}x\cos\rho_{n}t}{\alpha_{n}}\right)  \\
&  \quad+\sum_{n=0}^{N}\frac{\cos nt}{\alpha_{n}^{0}}\int_{0}^{x}G(x,s)\cos
ns\,ds-\sum_{n=0}^{N}\frac{\cos\rho_{n}t}{\alpha_{n}}\int_{0}^{x}%
H(x,s)\cos\rho_{n}s\,ds\\
&  =:\Phi_{N}(x,t)+I_{N,2}(x,t)+I_{N,4}(x,t)
\end{split}
\]
(here we follow notation from the proof of Theorem 1.3.1 \cite{Yurko2007}).

According to \cite[proof of Theorem 1.3.1]{Yurko2007} for any $f\in AC\left[
0,\pi\right]  $ the following limit exists
\[
\lim_{N\rightarrow\infty}\max_{0\leq x\leq\pi}\int_{0}^{\pi}f(t)\Phi
_{N}(x,t)\,dt=0.
\]
Moreover, uniformly with respect to $x\in\lbrack0,\pi]$,
\begin{align*}
\lim_{N\rightarrow\infty}\int_{0}^{\pi}f(t)I_{N,2}(x,t)dt &  =\int_{0}%
^{x}f(t)G(x,t)dt,\\
\lim_{N\rightarrow\infty}\int_{0}^{\pi}f(t)I_{N,4}(x,t)dt &  =-\int_{x}^{\pi
}f(t)H(t,x)dt.
\end{align*}
Hence, extending $G(x,t)$ and $H(x,t)$ by zero for $x<t$ we obtain that
\[
\lim_{N\rightarrow\infty}I_{N}(x,t)=\int_{0}^{\pi}%
f(t)\bigl(G(x,t)-H(t,x)\bigr)dt,
\]
thus establishing \eqref{DistributionalSense}.
\end{proof}

\begin{remark}
Our prime interest consists in application of the representation \eqref{H} for
computing the preimages of functions under the action of the transmutation
operator. From Theorem \ref{Th Representation of kernels} we have that for any
$f\in AC\left[  0,\pi\right]  $ the sequence
\begin{multline}
f(x)+\sum_{n=0}^{N}\left(  \frac{\cos\rho_{n}x\,}{\alpha_{n}}\int_{0}%
^{x}c(\rho_{n},t)f(t)\,dt-\frac{\cos nx\,}{\alpha_{n}^{0}}\int_{0}%
^{x}c(n,t)f(t)\,dt\right)  \label{preimageN}\\
=f(x)+\int_{0}^{x}\sum_{n=0}^{N}\left(  \frac{\cos\rho_{n}x\,c(\rho_{n}%
,t)}{\alpha_{n}}-\frac{\cos nx\,c(n,t)}{\alpha_{n}^{0}}\right)  f(t)\,dt
\end{multline}
tends to $T^{-1}\left[  f\right]  \left(  x\right)  $ uniformly. This gives us
a practical way for computing the preimages of absolutely continuous functions
reducing such computation to a number of definite integrals from \eqref{preimageN}.
\end{remark}

\begin{remark}
The convergence rate of the series in the representations \eqref{F}, \eqref{G}
and \eqref{H} improves when the parameter $\omega$ in
\eqref{asymptotics of spectral data} equals zero, see Section
\ref{Section Improvement} for details. Note that by appropriate choice of the
constant $H$ in \eqref{boundary conditions} the parameter $\omega$ can always
be set to zero. Since the kernels $G(x,t)$ and $H(x,t)$ do not depend on $H$,
a right choice of the constant $H$ can lead to a faster convergence of the
series in \eqref{F}, \eqref{G} and \eqref{H}. In the next section we show how
the series \eqref{G} and \eqref{H} can be modified in order to improve the
convergence rate even when $\omega\neq0$.
\end{remark}

\section{A Fourier series of a discontinuous function and improvement of
convergence}

\label{Section Improvement} Both series \eqref{G} and \eqref{H} converge
rather slowly, and, moreover, for $x=t=\pi$ they converge to the exact values
$G(\pi,\pi)$ and $H(\pi,\pi)$  if only the number $\omega$ from \eqref{omega}
equals zero.

A simple explanation (and an idea how to improve the convergence for an
arbitrary value of $\omega$) can be seen in the proof of Lemma 1.3.4 from
\cite{Yurko2007}. We briefly repeat the formulas for the function $F$ and
later present the proof for the functions $G$ and $H$. Following
\cite{Yurko2007} let us introduce the following function
\[
a(x)=\sum_{n=0}^{\infty}\left(  \frac{\cos\rho_{n}x}{\alpha_{n}}-\frac{\cos
nx}{\alpha_{n}^{0}}\right)  .
\]
The function $F$ can be expressed in the terms of the function $a$ as
follows:
\begin{equation}
F(x,t)=\frac{1}{2}\bigl(a(x+t)+a(x-t)\bigr).\label{F sum as}%
\end{equation}
Note that the function $F$ should be defined on the region $(x,t)\in
\lbrack0,\pi]\times\lbrack0,\pi]$ to be able to consider equation
\eqref{Gelfand-Levitan}, hence the function $a$ should be defined for
$x\in\lbrack-\pi,2\pi]$.

The function $a(x)$ can be represented as (see the proof of Lemma 1.3.4
\cite{Yurko2007} and notice that the factor $1/\alpha_{n}^{0}=2/\pi$ is
missing in the proof)
\begin{equation}
a(x)=-\frac{2\omega x}{\pi^{2}}\sum_{n=1}^{\infty}\frac{\sin nx}{n}%
+A_{2}(x),\label{a(x)Yurko}%
\end{equation}
where the function $A_{2}(x)$ is continuous on $[-\pi,2\pi]$. As for the first
sum in \eqref{a(x)Yurko}, we have
\begin{equation}
-\frac{2\omega x}{\pi^{2}}\sum_{n=1}^{\infty}\frac{\sin nx}{n}=%
\begin{cases}
-\frac{\omega|x|(\pi-|x|)}{\pi^{2}}, & |x|<2\pi,\\
0, & x=2\pi.
\end{cases}
\label{a(x)problem}%
\end{equation}
One can easily see that whenever $\omega\neq0$, there is a jump discontinuity
at $x=2\pi$, corresponding to $x=t=\pi$ for the function $F(x,t)$. Moreover,
the series \eqref{a(x)problem} converges slowly which implies the slow
convergence of the series representing the function $F$. Same happens to the
functions $G(x,t)$ and $H(x,t)$.

The idea to improve the convergence is to consider the following expression
for the function $a(x)$:
\begin{equation}
a(x)=\frac{\cos\rho_{0}x}{\alpha_{0}}-\frac{1}{\pi}+\sum_{n=1}^{\infty}\left(
\frac{\cos\rho_{n}x}{\alpha_{n}}-\frac{\cos nx}{\alpha_{n}^{0}}+\frac{2\omega
x}{\pi^{2}}\frac{\sin nx}{n}\right)  -\frac{\omega|x|(\pi-|x|)}{\pi^{2}%
},\label{a(x)correction}%
\end{equation}
that is, to subtract the slowly convergent series termwise and to add the
closed expression for the whole infinite sum. Note that as an additional
benefit of such reformulation, the series in \eqref{a(x)correction} is
uniformly convergent on the whole segment $[-\pi,2\pi]$, and hence the
function $a$ given by \eqref{a(x)correction} is continuous on $[-\pi,2\pi]$.
Based on this idea the following result is obtained.

\begin{theorem}
\label{Thm correction} The kernels $G$ and $H$ admit the following
representations
\begin{equation}%
\begin{split}
G(x,t) & =\sum_{n=1}^{\infty}\left(  \frac{c( n,x) \cos nt}{\alpha_{n}^{0}}-
\frac{c( \rho_{n},x) \cos\rho_{n}t}{\alpha_{n}} - \frac{2\omega}{\pi^{2}%
n}\Bigl(x\sin nx \cos nt + t\sin nt\cos nx\Bigr) \right) \\
& \quad+ \frac{c( 0,x)}{\pi}- \frac{c( \rho_{0},x) \cos\rho_{0}t}{\alpha_{0}}+
\frac{\omega}{\pi^{2}}\bigl(\pi x -x^{2}-t^{2}\bigr),\qquad0\le t\le x\le\pi
\end{split}
\label{Gcorrected}%
\end{equation}
and
\begin{equation}%
\begin{split}
H(x,t) & =\sum_{n=1}^{\infty}\left( \frac{\cos\rho_{n}x\,c(\rho_{n},t)}%
{\alpha_{n}}- \frac{\cos nx\,c(n,t)}{\alpha_{n}^{0}} + \frac{2\omega}{\pi
^{2}n}\Bigl(x\sin nx \cos nt + t\sin nt\cos nx\Bigr)\right) \\
& \quad+\frac{\cos\rho_{0}x\,c(\rho_{0},t)}{\alpha_{0}}- \frac{c(0,t)}{\pi
}-\frac{\omega}{\pi^{2}}\bigl(\pi x -x^{2}-t^{2}\bigr), \qquad0\le t\le
x\le\pi
\end{split}
\label{Hcorrected}%
\end{equation}
where the series converge uniformly and absolutely.
\end{theorem}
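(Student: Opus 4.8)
The plan is to repeat, now term by term inside the kernel series, the ``subtract the slowly convergent tail and add its closed form'' device already used for $a(x)$ in \eqref{a(x)correction}. Throughout I take $\alpha_n^0=\pi/2$ for $n\ge1$ and keep the $n=0$ term apart, as it reproduces the free terms $\frac{c(0,x)}{\pi}-\frac{c(\rho_0,x)\cos\rho_0t}{\alpha_0}$ of \eqref{Gcorrected}. First I would substitute the identity $c(\rho,x)=\cos\rho x+\int_0^x G(x,s)\cos\rho s\,ds$ from the proof of Theorem~\ref{Th Representation of kernels} into the general summand
\[
g_n(x,t):=\frac{c(n,x)\cos nt}{\alpha_n^0}-\frac{c(\rho_n,x)\cos\rho_n t}{\alpha_n}
\]
of \eqref{G}. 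Collecting the two integral terms under $G(x,s)$ splits it as $g_n=-f_n+r_n$, where $f_n(x,t)=\frac{\cos\rho_n x\cos\rho_n t}{\alpha_n}-\frac{\cos nx\cos nt}{\alpha_n^0}$ is exactly the summand of $F$ and
\[
r_n(x,t)=-\int_0^x G(x,s)f_n(s,t)\,ds .
\]

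Next I would extract the bad part of $f_n$. Expanding by \eqref{asymptotics of spectral data}, $\cos\rho_n x=\cos nx-\frac{(\omega/\pi+k_n)x}{n}\sin nx+O(n^{-2})$ and $\frac1{\alpha_n}=\frac2\pi+O(|K_n|/n)$, so that
\[
f_n=-\frac{2\omega}{\pi^2 n}\bigl(x\sin nx\cos nt+t\sin nt\cos nx\bigr)+\phi_n,
\]
where $\phi_n=O(|k_n|/n)+O(|K_n|/n)+O(n^{-2})$ is absolutely and uniformly summable by Cauchy--Schwarz (as $\{k_n\},\{K_n\}\in l_2$); this is the content of \eqref{a(x)correction} read through \eqref{F sum as}. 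Setting $C_n(x,t)=-\frac{2\omega}{\pi^2 n}(x\sin nx\cos nt+t\sin nt\cos nx)$, adding $C_n$ to $g_n$ cancels the bad part of $-f_n$ and leaves $g_n+C_n=-\phi_n+r_n$. For the remainder I would write $r_n=-\int_0^x G(x,s)\phi_n(s,t)\,ds+\frac{2\omega}{\pi^2n}\int_0^x G(x,s)\bigl(s\sin ns\cos nt+t\sin nt\cos ns\bigr)ds$; the first integral is bounded by $\pi\|G\|_\infty\sup_{s,t}|\phi_n|$, while the two integrals in the second are the cosine coefficients of $G(x,\cdot)$ and the sine coefficients of $s\,G(x,\cdot)$ on $(0,x)$, hence $l_2$-sequences in $n$ whose norms are bounded uniformly in $x$ by Bessel's inequality. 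The factor $1/n$ then makes $\sum_n r_n$ absolutely and uniformly convergent, so that $\sum_{n\ge1}(g_n+C_n)$ converges absolutely and uniformly --- this is the series in \eqref{Gcorrected}.

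It remains to evaluate the removed tail in closed form and to identify the result with $G$. From $\sum_{n\ge1}\frac{\sin n\theta}{n}=\frac{\pi-\theta}{2}$ $(0<\theta<2\pi)$ and the product-to-sum formulas one obtains, for $0\le t\le x\le\pi$, $\sum_{n\ge1}\frac1n x\sin nx\cos nt=\frac{x(\pi-x)}2$ and $\sum_{n\ge1}\frac1n t\sin nt\cos nx=-\frac{t^2}2$, whence $-\sum_{n\ge1}C_n=\frac{\omega}{\pi^2}(\pi x-x^2-t^2)$. To conclude that $g_0+\sum_{n\ge1}(g_n+C_n)-\sum_{n\ge1}C_n$ really equals $G$ --- rather than some other continuous function --- I would not invoke pointwise convergence of \eqref{G} (that is itself a consequence of this theorem, cf.\ Remark~\ref{Remark Discontinuity}), but instead pass to the limit in the distributional statement \eqref{DistributionalSense} of Theorem~\ref{Th Representation of kernels}: the partial sums of $\sum C_n$ are uniformly bounded and converge a.e.\ to the polynomial, so dominated convergence identifies $\int_0^\pi f(t)\,\bigl[g_0+\sum(g_n+C_n)-\sum C_n\bigr]dt$ with $\int_0^\pi f(t)(G(x,t)-H(t,x))\,dt$ for every $f\in AC[0,\pi]$; since $H(t,x)=0$ for $t<x$, the two continuous sides agree on $0\le t<x$ and, by continuity, on the whole triangle, giving \eqref{Gcorrected}. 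The representation \eqref{Hcorrected} follows the same route with $T_t$ in place of $T_x$: inserting $c(\rho,t)=\cos\rho t+\int_0^t G(t,s)\cos\rho s\,ds$ into \eqref{H} yields $f_n(x,t)+\int_0^t G(t,s)f_n(x,s)\,ds$, one adds $+\frac{2\omega}{\pi^2n}(x\sin nx\cos nt+t\sin nt\cos nx)$, and the sign of the closed-form term reverses because here one adds, rather than subtracts, the bad part.

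The step I expect to be the main obstacle is the uniform control of the integral remainders up to the diagonal and the corner. Two things need care. First, the $l_2$-bounds on $\int_0^x G(x,s)\cos ns\,ds$ and $\int_0^x G(x,s)s\sin ns\,ds$ must hold with a constant independent of $x$; this rests on the regularity of the kernel (that $s\mapsto s\,G(x,s)\in L_2(0,x)$ with norm bounded uniformly in $x\in[0,\pi]$), without which one gets only pointwise, not uniform, convergence. Second, the closed-form identity for $\sum C_n$ must be checked to survive on the diagonal $t=x$, where the individual sums $\sum\frac1n\sin nx\cos nt$ jump: the specific combination $x\sum\frac1n\sin nx\cos nt+t\sum\frac1n\sin nt\cos nx$ stays continuous and still equals $\frac{\pi x-x^2-t^2}2$ there, whereas at the single corner $x=t=\pi$ the polynomial exceeds the naive series value by exactly $\omega$. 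This excess is precisely the jump of Remark~\ref{Remark Discontinuity}, now absorbed into the continuous closed-form term, which is what allows \eqref{Gcorrected} and \eqref{Hcorrected} to hold on the full closed triangle $0\le t\le x\le\pi$.
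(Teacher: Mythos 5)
Your proof is correct and is built on the same central device the paper announces before the theorem: subtract the slowly convergent part $\frac{2\omega}{\pi^{2}n}\bigl(x\sin nx\cos nt+t\sin nt\cos nx\bigr)$ termwise and add back its closed-form sum $\frac{\omega}{\pi^{2}}\bigl(\pi\max\{x,t\}-x^{2}-t^{2}\bigr)$. The implementation, however, differs at the two technical steps. For the absolute and uniform convergence of the corrected series, the paper compares the kernel summand with the $F$-summand via the asymptotic expansion $c(\rho,t)=\cos\rho t+q_{1}(t)\frac{\sin\rho t}{\rho}+\dots$ from \cite[(1.1.15)]{Yurko2007} and shows the difference $\Delta_{n}$ is $O(1/n^{2})$; you instead pull the summand back to $f_{n}$ through the exact identity $c(\rho,x)=\cos\rho x+\int_{0}^{x}G(x,s)\cos\rho s\,ds$ and control the integral remainders by Bessel's inequality for the Fourier coefficients of $G(x,\cdot)$ and $sG(x,\cdot)$ --- a somewhat more elementary route that needs only the continuity of $G$ rather than the $O(1/\rho^{2})$ asymptotics of $c$. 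For the identification of the resulting continuous sum with the kernel, the paper relies on the relation $H-F=\sum\Delta_{n}$ (ultimately \eqref{H via F and G}) together with the corrected representation \eqref{F corrected} of $F$, whereas you pass to the limit in the distributional statement \eqref{DistributionalSense} of Theorem \ref{Th Representation of kernels}, using the uniform boundedness of the partial sums of $\sum\frac{\sin n\theta}{n}$ and dominated convergence, and then upgrade the a.e.\ identity to the closed triangle by continuity (noting that $G(x,x)=-H(x,x)$ makes $G(x,t)-H(t,x)$ continuous across the diagonal). This identification is more self-contained than the paper's and makes explicit a step the paper leaves implicit; as a bonus, the same limit read on $t>x$ yields \eqref{Hcorrected} simultaneously. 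Your discussion of the diagonal and of the corner $x=t=\pi$, where the closed-form term absorbs the jump of size $\omega$, agrees with Remark \ref{Remark Discontinuity}. Both arguments are sound and deliver the same theorem.
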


\begin{proof}
Let us show that the series for $F(x,t)$ and $H(x,t)$ given by \eqref{F} and
\eqref{H} differ from each other by an absolutely and uniformly convergent
series of continuous functions. Consider
\[%
\begin{split}
\Delta_{n} &  =\left(  \frac{\cos\rho_{n}x\,c(\rho_{n},t)}{\alpha_{n}}%
-\frac{\cos nx\,c(n,t)}{\alpha_{n}^{0}}\right)  -\left(  \frac{\cos\rho
_{n}x\,\cos\rho_{n}t}{\alpha_{n}}-\frac{\cos nx\,\cos nt}{\alpha_{n}^{0}%
}\right)  \\
&  =\frac{\cos\rho_{n}x}{\alpha_{n}}\bigl(c(\rho_{n},t)-\cos\rho
_{n}t\bigr)-\frac{\cos nx}{\alpha_{n}^{0}}\bigl(c(n,t)-\cos nt\bigr).
\end{split}
\]

The function $c(\rho,t)$ satisfies the following asymptotic relation
\cite[(1.1.15)]{Yurko2007}
\[
c(\rho,t)=\cos\rho t+q_{1}(t)\frac{\sin\rho t}{\rho}+\int_{0}^{t}%
q(s)\frac{\sin\rho(t-2s)}{2\rho}ds+O\left(  \frac{1}{\rho^{2}}\right)
,\qquad\rho\rightarrow\infty,
\]
where $q_{1}(t)=h+\frac{1}{2}\int_{0}^{t}q(s)\,ds$. Hence
\begin{equation}%
\begin{split}
\Delta_{n} &  =q_{1}(t)\left(  \frac{\cos\rho_{n}x\,\sin\rho_{n}t}{\alpha
_{n}\rho_{n}}-\frac{\cos nx\,\sin nt}{n\alpha_{n}^{0}}\right)  \\
&  \quad+\int_{0}^{t}q(t)\left(  \frac{\cos\rho_{n}x\,\sin\rho_{n}%
(t-2s)}{2\alpha_{n}\rho_{n}}-\frac{\cos nx\,\sin n(t-2s)}{2n\alpha_{n}^{0}%
}\right)  ds+O\left(  \frac{1}{n^{2}}\right)  ,\qquad n\rightarrow\infty.
\end{split}
\label{Expr for Delta n}%
\end{equation}
We obtain for the first term that
\[%
\begin{split}
\Delta_{n}^{1} &  :=\frac{\cos\rho_{n}x\,\sin\rho_{n}t}{\alpha_{n}\rho_{n}%
}-\frac{\cos nx\,\sin nt}{n\alpha_{n}^{0}}\\
&  =\frac{\cos\rho_{n}x\,\sin\rho_{n}t-\cos nx\,\sin nt}{\alpha_{n}\rho_{n}%
}+\left(  \frac{1}{\alpha_{n}\rho_{n}}-\frac{1}{n\alpha_{n}^{0}}\right)  \cos
nx\,\sin nt\\
&  =\frac{\sin\frac{(\rho_{n}-n)(x+t)}{2}\cos\frac{(\rho_{n}+n)(x+t)}{2}%
-\sin\frac{(\rho_{n}-n)(x-t)}{2}\cos\frac{(\rho_{n}+n)(x-t)}{2}}{\alpha
_{n}\rho_{n}}-\frac{\alpha_{n}\rho_{n}-n\alpha_{n}^{0}}{n\alpha_{n}\alpha
_{n}^{0}\rho_{n}}\cos nx\,\sin nt.
\end{split}
\]
Taking into account \eqref{asymptotics of spectral data} one can see that
$\rho_{n}-n=O(1/n)$ and $\alpha_{n}\rho_{n}-n\alpha_{n}^{0}=O(1)$, hence
$\Delta_{n}^{1}=O(1/n^{2})$, $n\rightarrow\infty$. Similarly for the second
term in \eqref{Expr for Delta n}.

Now, combining \eqref{F sum as} with \eqref{a(x)correction} we obtain that
\begin{equation}
\label{F corrected}%
\begin{split}
F(x,t)  &  = \sum_{n=1}^{\infty}\left( \frac{\cos\rho_{n} x \, \cos\rho_{n}%
t}{\alpha_{n}} - \frac{\cos nx\, \cos nt}{\alpha_{n}^{0}}+\frac{\omega
(x+t)}{\pi^{2}}\frac{\sin n(x+t)}{n} +\frac{\omega(x-t)}{\pi^{2}}\frac{\sin
n(x-t)}{n}\right) \\
&  \quad+\frac{\cos\rho_{0} x \, \cos\rho_{0} t}{\alpha_{0}} - \frac{1}{\pi}-
\frac{\omega|x+t|(\pi-|x+t|)}{2\pi^{2}}-\frac{\omega|x-t|(\pi-|x-t|)}{2\pi
^{2}}\\
&  = \sum_{n=1}^{\infty}\left( \frac{\cos\rho_{n} x \, \cos\rho_{n}t}%
{\alpha_{n}} - \frac{\cos nx\, \cos nt}{\alpha_{n}^{0}}+\frac{2\omega}{\pi^{2}
n}\Bigl( x\sin nx\,\cos nt + t\sin nt\,\cos nx\Bigr)\right) \\
& \quad+\frac{\cos\rho_{0} x \, \cos\rho_{0} t}{\alpha_{0}} - \frac{1}{\pi} -
\frac{\omega}{\pi^{2}}\bigl(\pi\max\{x,t\} -x^{2}-t^{2}\bigr),
\end{split}
\end{equation}
where the series converges absolutely and uniformly and the equality holds
whenever $x<\pi$ or $t<\pi$, i.e., $\min\{x,t\}<\pi$.

Since
\begin{equation}
\label{Diff H and F}H(x,t) = F(x,t) - \sum_{n=0}^{\infty}\Delta_{n},
\end{equation}
where the series $\sum_{n=0}^{\infty}\Delta_{n}$ converges absolutely and
uniformly, we immediately obtain \eqref{Hcorrected} from \eqref{F corrected}
and \eqref{Diff H and F} for all $x,t$ which satisfy $\min\{x,t\}<\pi$. Note
also that the kernel $H$ is a continuous function on $0\le t\le x\le\pi$, and
the right hand side of \eqref{Hcorrected} is also a continuous function on the
same region. Hence the equality holds for $x=t=\pi$ as well.

The proof of \eqref{Gcorrected} is completely similar.
\end{proof}

\section{An explicit example}

Let us consider an exactly solvable example which reveals some important
features of the representations (\ref{G}) and (\ref{H}). Let $q\equiv1$ and
$h=0$. Then
\[
c(\rho,x) =T[ \cos\rho x] =\cos\mu x
\]
where $\mu:=\sqrt{\rho^{2}-1}$. In particular,
\[
c( 0,x) =T[ 1] =\cosh x.
\]

The integral kernel $G$ for this example is given by, see \cite[Example 6]{KT
Transmut} and \cite[(1.2.7)]{Marchenko}
\begin{equation}
\label{Gexact}G(x,t) =
\begin{cases}
\frac{x I_{1}(\sqrt{x^{2}-t^{2}})}{\sqrt{x^{2}-t^{2}}}, & t<x,\\
\frac x2, & t=x,
\end{cases}
\end{equation}
where $I_{1}$ is the modified Bessel function of the first kind.

Consider first the corresponding Sturm-Liouville problem with $H=0$. Then
$c\left(  \rho_{n},x\right)  =\cos\sqrt{\rho_{n}^{2}-1}x$, $\rho_{n}%
=\sqrt{n^{2}+1}$, $\alpha_{n}=\alpha_{n}^{0}$. Hence \eqref{G} gives
\begin{equation}%
\begin{split}
\label{G 1}G_{1}(x,t) & =\sum_{n=0}^{\infty}\frac{1}{\alpha_{n}^{0}}\left(
\cos\sqrt{n^{2}-1}x\,\cos nt-\cos nx\,\cos\sqrt{n^{2}+1}t\right) \\
& = \frac{\cosh x - \cos t}\pi+ \sum_{n=1}^{\infty}\frac{2}{\pi}\left(
\cos\sqrt{n^{2}-1}x\,\cos nt-\cos nx\,\cos\sqrt{n^{2}+1}t\right) .
\end{split}
\end{equation}
On the other hand choosing the constant $H$ in (\ref{boundary conditions})
equal to $-\pi/2$ we obtain that $\omega=0$. Let us construct the
corresponding series representation for $G(x,t)$. Thus, $q\equiv1$, $h=0$ and
$H=-\pi/2$. Then $c( \rho_{n},x) =\cos\mu_{n}x$, where $\mu_{n}=\sqrt{\rho
_{n}^{2}-1}$ are solutions of the characteristic equation
\begin{equation}
\mu\sin\mu\pi+\frac{\pi}{2}\cos\mu\pi=0 \label{character}%
\end{equation}
and $\alpha_{n}=\frac{\pi}{2}+\frac{\sin2\mu_{n}\pi}{4\mu_{n}}$. Notice that
the problem possesses one negative eigenvalue $\rho_{0}^{2}\approx-1.468$. The
corresponding series representation for $G(x,t)$ takes the form%
\begin{equation}%
\begin{split}
\label{G 2}G_{2}(x,t)  &  =\frac{\cosh x}{\pi}-\frac{\cos\mu_{0}x\,\cos
\rho_{0}t}{\alpha_{0}}\\
&  \quad+\sum_{n=1}^{\infty}\left(  \frac{2\cos\sqrt{n^{2}-1}x\,\cos nt}{\pi
}-\frac{\cos\mu_{n}x\,\cos\rho_{n}t}{\alpha_{n}}\right)  .
\end{split}
\end{equation}

Additionally, for the case $h=H=0$ we consider the representation given by
\eqref{Gcorrected},
\begin{equation}%
\begin{split}
\label{G 3}
G_{3}(x,t) &  =\sum_{n=1}^{\infty}\frac{2}{\pi}\left(  \cos\sqrt{n^{2}%
-1}x\,\cos nt-\cos nx\,\cos\sqrt{n^{2}+1}t-\frac{x\sin nx\,\cos nt+t\sin
nt\,\cos nx}{2n}\right)  \\
&  \quad+\frac{\cosh x-\cos t}{\pi}+\frac{\omega}{\pi^{2}}\bigl(\pi
x-x^{2}-t^{2}\bigr).
\end{split}
\end{equation}

We stress that the representations \eqref{G 1}, \eqref{G 2} and \eqref{G 3}
correspond to the same kernel $G(x,t)$.

We computed approximate integral kernels by truncating the series in
\eqref{G 1}, \eqref{G 2} and \eqref{G 3} and compared with the exact integral
kernel at $x=\pi$. On Figure \ref{Figure KernelErr} we present the absolute
value of the differences, 10 and 100 terms of the series were used. In
accordance with Remark \ref{Remark Discontinuity} the difference between
partial sums of the series \eqref{G 1} and the exact value at $t=\pi$ remains
close to $\pi/2$, while both series \eqref{G 2} and \eqref{G 3} converge
uniformly and faster. All computations were realized in Matlab 2017a. Notice
that opposite to \eqref{G 1} and \eqref{G 3}, the approximation obtained from
\eqref{G 2} requires $\mu_{n}$ to be computed numerically from
\eqref{character}. This was done by converting the function $\mu\sin\mu
\pi+\frac{\pi}{2}\cos\mu\pi$ into a spline and finding its zeros with the aid
of the Matlab routine \texttt{fnzeros}.

\begin{figure}[tbh]
\centering
\includegraphics[bb=0 0 216 180, width=3in,height=2.5in]
{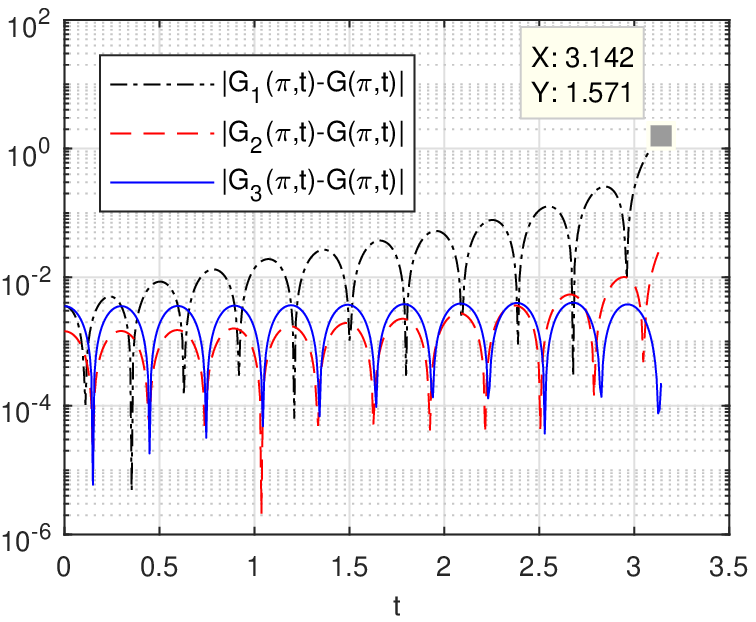}\ \ \ \includegraphics[bb=0 0 216 180, width=3in,height=2.5in]
{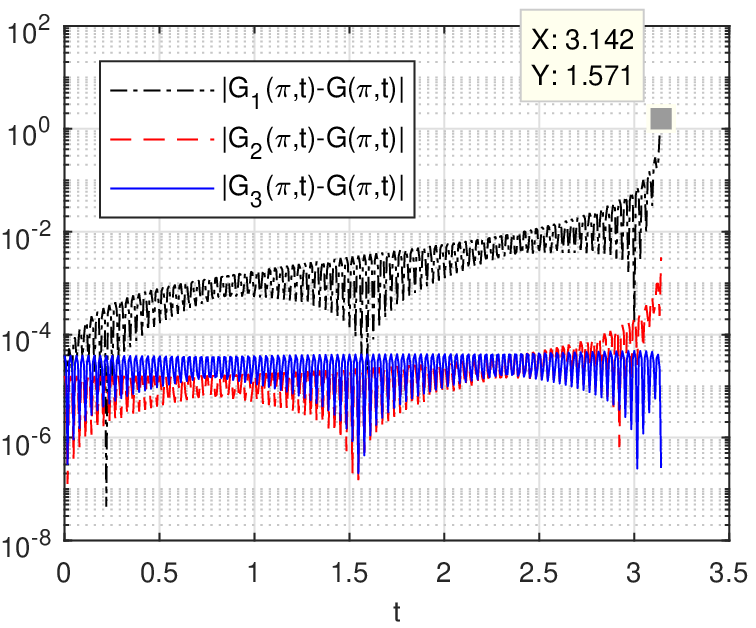}\caption{Absolute errors of the approximate integral kernel
computed using truncated sums of the series \eqref{G 1}, \eqref{G 2} and
\eqref{G 3}. Left plot: 10 terms used. Right plot: 100 terms used.}%
\label{Figure KernelErr}%
\end{figure}

Now let us compare the convergence rate of the series applying all three
representations for computing $T[1]$. Of course, in our example,
$T[1]=c\left(  0,x\right)  =\cosh x$. Thus, using \eqref{G 1}, \eqref{G 2} and
\eqref{G 3} we construct three approximations of the function $\cosh x$,
\begin{align}
\cosh x  &  \approx1+\frac{x\cosh x-\sin x}{\pi}+\sum_{n=1}^{N}\frac{2}{\pi
}\left(  \frac{\cos\sqrt{n^{2}-1}x\,\sin nx}{n}-\frac{\cos nx\,\sin\sqrt
{n^{2}+1}x}{\sqrt{n^{2}+1}}\right) ,\label{G1 approx}\\
\cosh x  &  \approx1+\frac{x\cosh x}{\pi}-\frac{\cos\mu_{0}x\,\sin\rho_{0}%
x}{\alpha_{0}\rho_{0}} +\sum_{n=1}^{N}\left(  \frac{2\cos\sqrt{n^{2}-1}x\,\sin
nx}{\pi n}-\frac{\cos\mu_{n}x\,\sin\rho_{n}x}{\alpha_{n}\rho_{n}}\right)
,\label{G2 approx}%
\end{align}
and
\begin{equation}
\label{G3 approx}%
\begin{split}
\cosh x  &  \approx1+\frac{x\cosh x-\sin x}{\pi} +\frac{\omega x^{2}}{\pi} -
\frac{4\omega x^{3}}{3\pi^{2}}\\
&  \quad+\sum_{n=1}^{N}\frac{2}{\pi}\left(  \frac{\cos\sqrt{n^{2}-1}x\,\sin
nx}{n}-\frac{\cos nx\,\sin\sqrt{n^{2}+1}x}{\sqrt{n^{2}+1}} + \frac{x\cos
2nx}{2n^{2}} - \frac{\sin2nx}{4n^{3}}\right) ,
\end{split}
\end{equation}
respectively.

For $N=10$ the absolute error of the first approximation was $9.5\cdot10^{-2}%
$, of the second $5.5\cdot10^{-4}$ and of the third $3.3\cdot10^{-4}$. For
$N=100$ the absolute error of the first approximation was $9.9\cdot10^{-3}$,
of the second $4.0\cdot10^{-6}$ and that of the third $3.8\cdot10^{-7}$.
Finally, for $N=1000$ the absolute error of the first approximation was
$1.0\cdot10^{-3}$, of the second $9.8\cdot10^{-9}$ and that of the third
$3.9\cdot10^{-10}$. All three series converge slowly. However the convergence
rate greatly improves either considering the second representation
\eqref{G2 approx} corresponding to $\omega=0$ or the third representation
\eqref{G3 approx}. On Figure \ref{Figure T1} we present the absolute errors of
representations \eqref{G1 approx}, \eqref{G2 approx} and \eqref{G3 approx} as
functions of $N$.

\begin{figure}[tbh]
\centering
\includegraphics[bb=0 0 324 180, width=4.5in,height=2.5in]
{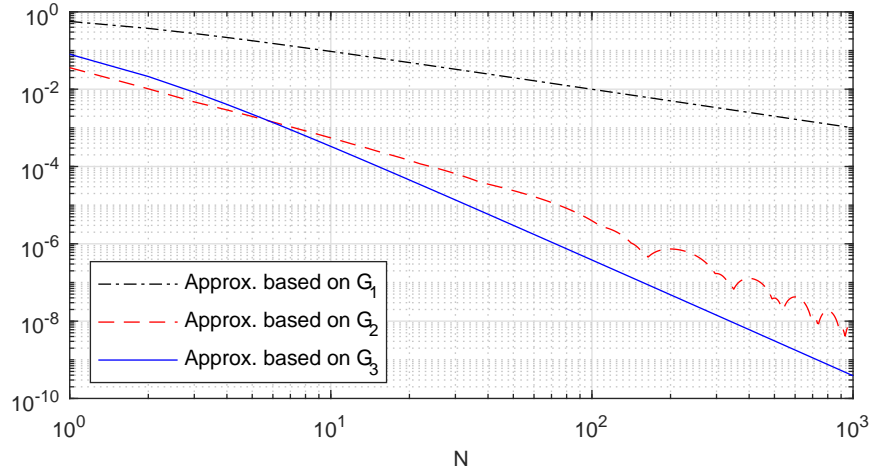}\caption{Absolute errors of computation of $T[1]$ using series
\eqref{G1 approx} (black dot-dashed line), \eqref{G2 approx} (red dashed line)
and \eqref{G3 approx} (solid blue line) as functions of the number of terms
used.}%
\label{Figure T1}%
\end{figure}

\end{document}